\documentclass[10pt, reqno]{amsart}

\parindent=10pt
\textheight=23cm
\textwidth=16cm
\parskip=1.5pt
\oddsidemargin=0pt
\evensidemargin=0pt
\topmargin=-20pt

\usepackage{amssymb}
\usepackage{amsmath}
\usepackage{amsthm}
\usepackage{mathrsfs}
\usepackage{dsfont}
\usepackage{url}

\newtheorem{theorem}{Theorem}
\newtheorem{lemma}[theorem]{Lemma}
\newtheorem{conjecture}[theorem]{Conjecture}

\numberwithin{equation}{section}
\numberwithin{theorem}{section}

\begin{document}

\title[Jumping champions and gaps between consecutive primes]{Jumping champions and gaps between consecutive primes}

\author[D. A. Goldston]{D. A. Goldston$^{*}$}

\address{Department of Mathematics, San Jos\'{e} State University, One Washington Square\\
 San Jos\'{e}, California 95192-0103, United States of America}

\thanks{$^{*}$Research of the first author was supported in part by the National Science Foundation Grant DMS-0804181.}

\email{goldston@math.sjsu.edu}

\author[A. H. Ledoan]{A. H. Ledoan}

\address{Department of Mathematics, University of Rochester, Hylan Building\\
Rochester, New York 14627-0138, United States of America}

\email{ledoan@math.rochester.edu}

\subjclass[2010]{Primary 11N05. Secondary 11P32, 11N36.}

\keywords{Differences between consecutive primes; Hardy-Littlewood prime pair conjecture; jumping champion; maximal prime gaps; primorial numbers; sieve methods; singular series.}

\begin{abstract}
The most common difference that occurs among the consecutive primes less than or equal to $x$ is called a jumping champion.  Occasionally there are ties. Therefore there can be more than one jumping champion for a given $x$.  In 1999 A. Odlyzko, M. Rubinstein, and M. Wolf provided heuristic and empirical evidence in support of the conjecture that the numbers greater than 1 that are jumping champions are  4 and the primorials 2, 6, 30, 210, 2310, \ldots.  As a step towards proving this conjecture they introduced a second weaker conjecture that any fixed prime $p$ divides all sufficiently large jumping champions. In this paper we extend a method of P. Erd\H{o}s and E. G. Straus from 1980 to prove that the second conjecture follows directly from the prime pair conjecture of G. H. Hardy and J. E. Littlewood.
\end{abstract}

\maketitle

\thispagestyle{empty}

\section{The most likely differences between consecutive primes}	

In an issue of the 1977-1978 volume of Journal of Recreational Mathematics, H. Nelson \cite{Nelson1977-1978} proposed the following unsolved Problem 654:
\begin{quote}
``Find the most probable difference between consecutive primes.''
\end{quote}
The following year the 1978-1979 volume of the journal had  the Editor's  Comment \cite{Nelson1978-1979}:
\begin{quotation}
``No solution has been received, though there has been a good deal of evidence presented pointing to the reasonable conjecture that there is no most probable difference between consecutive primes. On the other hand, there is also some evidence that 6 is the most probable such difference, and it is known from computer counts that 6 is the most probable difference for primes less than $10^9$. However, there seems to be good reason to expect that 30 will eventually replace 6 as the most probable difference and still later 210, 2310, 30030, etc. will have their day.''
\end{quotation}
As indicated subsequently in the same comment, Nelson was motivated to ask his question by a statement in Popular Computing Magazine that 6 appears to be the most common distance between primes. The editor concluded that: ``The problem will be left open for another year.''

In 1980 P. Erd\H{o}s and E. G. Straus \cite{ErdosStraus1980} showed, on the assumption of the truth of the prime pair conjecture of G. H. Hardy and J. E. Littlewood \cite{HardyLittlewood1923}, that there is no most likely difference because the most likely difference grows as one considers larger and larger numbers. In 1993 J. H. Conway invented the term jumping champion to refer to the most frequently occurring difference between consecutive primes less than or equal to $x$. For the $n$th prime $p_n$, the jumping champions are the values of $d$ for which
\[
N(x,d)\mathrel{\mathop:}= \sum_{\substack{p_{n}\leq x \\ p_{n} - p_{n - 1} = d}} 1
\]
attains its maximum
\[
N^{*}(x)
 \mathrel{\mathop:}= \max_{d} N(x,d).
\]
Thus the set $D^{*}(x)$ of jumping champions for primes less than or equal to $x$ is given by
\[
D^{*}(x)
 \mathrel{\mathop:}= \{ d^{*} \colon N(x,d^{*}) = N^{*}(x)\}.
\]

Table \ref{table1} below summarizes  everything we presently know about jumping champions less than or equal to $x$.  Professor M. Wolf has kindly informed us that the search for the first occurrences of prime gaps and maximal prime gaps was extended in 1999 to $10^{15}$ by T. R. Nicely \cite{Nicely1999} and that 6 is the jumping champion. Other than for the sets $D^{*}(3) = \{1\} $ and $D^{*}(5) = \{1,2\}$, it is not known if the numbers indicated in the right-most column of the table are the largest prime occurrence of $x$ for the corresponding set $D^{*}(x)$ in the left-most column.

\begin{table}[ht] \label{table1}
\caption{Known jumping champions for small $x$.}
{\begin{tabular}{@{}ccc@{}}
$D^{*}(x)$		&Smallest Prime	&	Largest Known Prime		\\
			&Occurrence of $x$	&	Occurrence of $x$			\\
\hline
$\{1\}	$		&	3			&	3						\\
$\{1, 2\}$		&	5			&	5						\\
$\{2\}	$		&	7			&	433						\\
$\{2, 4\}$		&	101			&	173						\\
$\{4\}$		&	131			&	541						\\
$\{2, 4, 6\}$	&	179			&	487						\\
$\{2, 6\}$		&	379			&	463						\\
$\{6\}	$		&	389			&	$>10^{15}$				\\
$\{4, 6\}$		&	547			&	941						\\
\end{tabular}}
\end{table}

In their paper \cite{OdlyzkoRubinsteinWolf1999} of 1999 A. Odlyzko, M. Rubinstein, and M. Wolf made the following two simple and elegant conjectures.

\begin{conjecture}\label{conjecture1}
The jumping champions greater than 1 are 4 and the primorials 2, 6, 30, 210, 2310, \ldots.
\end{conjecture}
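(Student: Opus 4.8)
The plan is to assume a strong, uniform form of the Hardy--Littlewood prime pair conjecture and to convert it into an asymptotic for $N(x,d)$ itself, rather than for the count of all pairs $p,\,p+d$ of primes. The two differ because $N(x,d)$ demands that $p$ and $p+d$ be \emph{consecutive}, so no prime may lie strictly between them. Following the Erd\H{o}s--Straus philosophy, I would first write the number of pairs $p,\,p+d$ with $p\le x$ as $\mathfrak{S}(d)\,x/(\log x)^2$ up to an error, where
\[
\mathfrak{S}(d)=2C_2\prod_{\substack{q\mid d\\ q>2}}\frac{q-1}{q-2},\qquad C_2=\prod_{q>2}\Bigl(1-\frac{1}{(q-1)^2}\Bigr),
\]
for even $d$ and $\mathfrak{S}(d)=0$ for odd $d>1$. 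An inclusion--exclusion over the possible intermediate primes then removes those pairs that are not consecutive, and the heuristic of Odlyzko, Rubinstein and Wolf suggests the clean shape $N(x,d)\sim \mathfrak{S}(d)\,W(d,x)$, in which $W(d,x)$ is a positive weight depending on $d$ only through $d/\log x$ (hence independent of the arithmetic of $d$) that decreases as $d$ grows: the larger the gap, the less likely it is free of interior primes. The whole problem then becomes the maximization of the product $\mathfrak{S}(d)\,W(d,x)$ over $d$ for each fixed $x$.

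The arithmetic heart of the matter is an extremal property of $\mathfrak{S}$: I would prove that $\mathfrak{S}(d)$ is maximized, among all $d$ not exceeding a given bound, precisely at primorials. The mechanism is transparent from the product formula, since adjoining to $d$ the smallest prime it does not yet contain multiplies $\mathfrak{S}(d)$ by the largest available factor $(q-1)/(q-2)$, whereas adjoining repeated or larger primes is comparatively wasteful. Thus for each threshold the champion of $\mathfrak{S}$ is a primorial $P_k=2\cdot3\cdot5\cdots p_k$, and the only serious competitors are $P_k$ multiplied by a further small prime. The isolated value $4$ must be recovered from the boundary behavior at $q=2$: doubling contributes nothing beyond the fixed factor $2C_2$, so $d=4$ can win only in the earliest regime, where $W(4,x)$ is large enough to offset the slightly larger $\mathfrak{S}(6)$.

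With the extremal lemma in hand, the remaining task is the transition analysis. For each $x$ one compares $\mathfrak{S}(P_{k+1})\,W(P_{k+1},x)$ with $\mathfrak{S}(P_k)\,W(P_k,x)$ and with the products for all non-primorial $d$ of intermediate size, and shows that as $x$ increases the maximizer steps through $4,\,6,\,30,\,210,\,2310,\dots$ in order and never lands on any other value. This reduces to a finite but $x$-dependent family of inequalities of the form $\mathfrak{S}(d)/\mathfrak{S}(P_k)<W(P_k,x)/W(d,x)$, that is, a comparison of a purely arithmetic ratio against a purely analytic one.

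The main obstacle is uniformity in $d$. The maximal prime gap below $x$ is of size $\gg(\log x)^2$, so the maximization ranges over $d$ as large as a power of $\log x$, and every known or conjectured form of the prime pair estimate carries an error term that grows with $d$; worse, making the exclusion weight $W(d,x)$ rigorous requires control of the \emph{absence} of interior primes, which in turn invokes the full $k$-tuple correlations rather than the pair conjecture alone. It is precisely this uniformity that places the full Conjecture~\ref{conjecture1} beyond the prime pair conjecture as presently understood. What does survive is the coarser comparison between $d$ divisible and not divisible by a single fixed prime $p$: there the factor $\mathfrak{S}$ gains a clean multiplicative advantage of $(p-1)/(p-2)$, robust enough to dominate the available error terms, and this is the route by which the weaker divisibility conjecture --- rather than Conjecture~\ref{conjecture1} itself --- is the statement one can actually establish from Hardy--Littlewood.
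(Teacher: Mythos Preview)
There is nothing to compare against: Conjecture~\ref{conjecture1} is stated in the paper as an open conjecture, and the paper makes no attempt to prove it. The paper's contribution is Theorem~\ref{main_theorem}, which establishes only the weaker Conjecture~\ref{conjecture2} under a uniform Hardy--Littlewood hypothesis for $2\le d\le(\log x)^2$.

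Your proposal is not a proof either, and to your credit you say so explicitly in the last paragraph. The heuristic shape $N(x,d)\sim\mathfrak{S}(d)\,W(d,x)$ you describe is the Odlyzko--Rubinstein--Wolf picture, and your diagnosis of the obstruction is exactly right: turning the inclusion--exclusion over interior primes into a rigorous asymptotic for $N(x,d)$, uniformly in $d$, requires control of prime $k$-tuple correlations for unbounded $k$, not merely the pair conjecture. The paper sidesteps this entirely by using only the crude sieve bound $\pi_3(x,d,d')\ll\mathfrak{S}(\{0,d',d\})\,x/(\log x)^3$ to show that the non-consecutive contribution is negligible when $d\le\sqrt{\log x}$, which suffices for Conjecture~\ref{conjecture2} but says nothing about the precise transition points needed for Conjecture~\ref{conjecture1}. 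Your extremal observation about $\mathfrak{S}$ and primorials is the paper's Lemma~\ref{auxiliary_lemma}. In short, your final paragraph and the paper arrive at the same conclusion: Hardy--Littlewood yields Conjecture~\ref{conjecture2}, while Conjecture~\ref{conjecture1} remains out of reach.
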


\begin{conjecture}\label{conjecture2}
The jumping champions tend to infinity. Furthermore, any fixed prime $p$ divides all sufficiently large jumping champions.
\end{conjecture}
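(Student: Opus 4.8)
The plan is to argue by contradiction, after disposing of $p=2$ at once: for $x\ge 7$ one has $N(x,d)=0$ for every odd $d>1$, since consecutive primes cannot differ by an odd amount exceeding $1$, whereas $N(x,2)\ge 2$, so no odd $d>1$ is a jumping champion and every champion is even. Now fix an odd prime $p$. Since $\sum_{d}d\,N(x,d)=p_{\pi(x)}-2\le x$, we have $N(x,d)\le x/d$, and a crude counting argument gives $N^{*}(x)\gg x/(\log x)^{2}$; hence for $x$ large every jumping champion $d^{*}$ satisfies $d^{*}\le(\log x)^{3}$. Assuming some such $d^{*}$ has $p\nmid d^{*}$, I would produce a competitor $D$, divisible by $p$, with $N(x,D)>N(x,d^{*})$, contradicting maximality of $N(x,d^{*})$.

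For $d$ even write $\mathfrak{S}(d)=2C_{2}\prod_{q\mid d,\,q>2}\frac{q-1}{q-2}$, where $C_{2}=\prod_{q>2}\bigl(1-(q-1)^{-2}\bigr)$, and let $P(x,d)$ count the $n\le x$ with $n$ and $n+d$ both prime. I would take $D=D(x)=\prod_{q\le y}q$ with $y=y(x)$ as large as the sieve estimate below permits, which turns out to be $D\asymp(\log x)/(\log\log x)^{C}$ for a suitable fixed $C$; in particular $p\mid D$ once $x$ is large. By the Hardy--Littlewood prime pair conjecture, in a form uniform for $d$ up to a fixed power of $\log x$ (a very mild strengthening of the fixed-$d$ statement), $P(x,d)=(1+o(1))\mathfrak{S}(d)\,x/(\log x)^{2}$. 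Writing $N(x,D)=P(x,D)-Q(x,D)$, where $Q(x,D)$ counts the prime $n\le x$ with $n+D$ prime and a prime strictly between $n$ and $n+D$, I would bound $Q(x,D)$ by the union over the intermediate even offsets $j$ of the number of $n\le x$ with $n,n+j,n+D$ all prime, and apply the Selberg (or Brun) upper bound sieve to each triple, getting $Q(x,D)\ll x\,(\log\log x)^{O(1)}\,D/(\log x)^{3}=o\bigl(P(x,D)\bigr)$ by the choice of $D$. Hence $N(x,D)=(1+o(1))\mathfrak{S}(D)\,x/(\log x)^{2}$, and by Mertens' theorems $\mathfrak{S}(D)=(1+o(1))e^{\gamma}\log\log D=(1+o(1))e^{\gamma}\log\log\log x$.

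On the other side, $N(x,d^{*})\le P(x,d^{*})=(1+o(1))\mathfrak{S}(d^{*})\,x/(\log x)^{2}$. Since $p\nmid d^{*}$ and $d^{*}\le(\log x)^{3}$, the value $\mathfrak{S}(d^{*})$ is largest when the odd prime divisors of $d^{*}$ are all primes up to some $y'$ except $p$, and then $\prod_{q\le y'}q\le p\,d^{*}\le p(\log x)^{3}$ forces $y'\le(1+o(1))\,3\log\log x$; deleting the factor $\tfrac{p-1}{p-2}$ from the corresponding primorial value yields $\mathfrak{S}(d^{*})\le(1+o(1))\tfrac{p-2}{p-1}\,e^{\gamma}\log\log\log x$. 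As $\tfrac{p-2}{p-1}<1$, for $x$ sufficiently large $N(x,d^{*})<N(x,D)\le N^{*}(x)$, the required contradiction; so for $x\ge x_{0}(p)$ every jumping champion is divisible by $p$. The same comparison, run against an arbitrary fixed $d$ (for which $N(x,d)\le P(x,d)=O_{d}(x/(\log x)^{2})$), shows $N^{*}(x)/(x/(\log x)^{2})\to\infty$ and hence that the champions tend to infinity. Finally, to pass from the previous sentence to ``$p$ divides every sufficiently large champion,'' I would note that the champions for $x<x_{0}(p)$ form a finite set, so any champion exceeding the maximum of that set occurs only for $x\ge x_{0}(p)$ and is therefore divisible by $p$.

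The step I expect to be the crux is the asymptotic $N(x,D)=(1+o(1))\mathfrak{S}(D)\,x/(\log x)^{2}$, that is, showing almost all of the $\mathfrak{S}(D)\,x/(\log x)^{2}$ prime pairs at distance $D$ are consecutive: there is no conditional lower bound for an individual $N(x,d)$ available from the pair conjecture alone, so one must work with $N^{*}(x)$ through this competitor, and the construction forces a trade-off between the size of $D$ (large $D$ gives large $\mathfrak{S}(D)$ but too many intervening primes for the sieve bound on $Q(x,D)$ to be $o(P(x,D))$) and the growth of the singular series. The argument closes only because $\mathfrak{S}$ of a primorial grows, like $e^{\gamma}\log\log D$, so that even the modest admissible $D$ outscores every $p$-free difference, with the Mertens factor $\tfrac{p-1}{p-2}>1$ supplying exactly the needed slack. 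A secondary point is the uniformity demanded of the prime pair conjecture; this can be kept minimal, since one may instead let $D$ grow as slowly as $\exp\bigl((\log\log x)^{1-\varepsilon_{p}}\bigr)$ and still win on the $\tfrac{p-1}{p-2}$ margin, so that essentially no uniformity beyond the conjecture holding for $d$ up to a slowly growing function of $x$ is required.
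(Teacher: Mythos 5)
Your proposal is correct and follows essentially the same route as the paper: bound every champion by a fixed power of $\log x$ via $N(x,d)\le x/d$ against a lower bound for $N^{*}(x)$, pass from $\pi_2$ to $N$ by a Brun/Selberg sieve bound on prime triples, and play a primorial competitor off against a $p$-free champion, with Mertens' theorem showing the two relevant singular series agree to within a factor $1+o(1)$ so that the missing factor $\tfrac{p-1}{p-2}$ forces the contradiction. The only differences are cosmetic: you take a larger competitor $D\asymp\log x/(\log\log x)^{C}$ (which needs the sharper $(\log\log d)^{O(1)}$ bound on the triple singular series that the paper only remarks on), you use an unconditional pigeonhole bound $N^{*}(x)\gg x/(\log x)^{2}$ in place of the conditional one, and you evaluate $\mathfrak{S}$ of a primorial asymptotically as $e^{\gamma}\log\log D$ rather than bounding a ratio of two such values.
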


Conjecture \ref{conjecture2} is an immediate consequence of Conjecture \ref{conjecture1}. Odlyzko, Rubinstein, and Wolf remarked that Conjecture \ref{conjecture2} ``should be considerably easier to prove, and might conceivably be provable unconditionally.'' As already mentioned, the first assertion of Conjecture \ref{conjecture2} was  proved in 1980 by Erd\H{o}s and Straus \cite{ErdosStraus1980}, under the assumption of the truth of the Hardy-Littlewood prime pair conjecture. Here we extend the argument of Erd\H{o}s and Straus to give a complete proof of Conjecture \ref{conjecture2}, again under the assumption of the Hardy-Littlewood prime pair conjecture.

\section{The Hardy-Littlewood prime pair conjecture}\label{section2}

The Hardy-Littlewood prime pair conjecture is that
\begin{equation} \label{eq1}
\pi_2(x,d)
 \mathrel{\mathop:}= \sum_{\substack{p \leq x \\ p - p' = d}} 1
 \sim\mathfrak{S}(d) \frac{x}{(\log x)^2}, \quad \mbox{as $x \to \infty$},
\end{equation}
for even integers $d\ge 2$, where the sum is taken over primes $p \leq x$ such that $p - p' = d$. Here $p'$ is a previous prime before $p$ but not necessarily adjacent to $p$. Moreover, the singular series $\mathfrak{S}(d)$ is defined for all integers $d \neq 0$ by
\[
 \mathfrak{S}(d) = \left\{ \begin{array}{ll}
      {\displaystyle 2C_2\prod_{\substack{p \mid d \\ p > 2}} \left(\frac{p - 1}{p - 2}\right),} & \mbox{if $d$ is  even;} \\
      0,   & \mbox{if $d$ is  odd;} \\
\end{array}
\right.
\]
where
\[
C_2
 = \prod_{p > 2}\left(1 - \frac{1}{(p - 1)^2}\right)
 = 0.66016\ldots
\]
with the product extending over all primes $p > 2$.
It is reasonable to suppose that the Hardy-Littlewood prime pair conjecture will hold uniformly for any integer $d$ in the range $2\leq d \leq x$, but we will need it only in a shorter range. 

\begin{theorem} \label{main_theorem}
Assume that the Hardy-Littlewood prime pair conjecture \eqref{eq1} holds uniformly for integers $d$ in the range $2\leq d \leq (\log x)^2$ for all sufficiently large $x$. Then Conjecture \ref{conjecture2} is true. 
\end{theorem}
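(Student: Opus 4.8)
The plan is to squeeze $N(x,d)$ between the prime pair count and an elementary gap bound: for $d$ a little below $\log x$ the assumed conjecture \eqref{eq1} gives $N(x,d)\sim\mathfrak S(d)x/(\log x)^2$, while for large $d$ the trivial inequality $N(x,d)\le x/d$ (the gaps counted by $N(x,d)$ are pairwise disjoint and sum to at most $x$) takes over. Since the second assertion of Conjecture \ref{conjecture2} implies the first — if every prime $p\le N$ divides all jumping champions once $x$ is large, then eventually every champion exceeds $\prod_{p\le N}p$ — it is enough to fix a prime $p_0$ and prove $p_0\mid d$ for all $d\in D^*(x)$ once $x$ is large. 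For $p_0=2$ this is immediate: for $x$ large $N(x,d)=0$ for odd $d\ge 3$ and $N(x,1)=1<N^*(x)$, so all champions are even. So assume $p_0\ge 3$ and, for contradiction, that arbitrarily large $x$ admit a champion $d^*$ with $p_0\nmid d^*$.

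\textbf{The analytic input.} The key estimate I would establish is a one-sided bound with a threshold $D_0=D_0(x)$, which may be taken as large as $\log x/(\log\log x)^{3}$:
\[
N(x,d)\ \ge\ \bigl(1+o(1)\bigr)\,\mathfrak S(d)\,\frac{x}{(\log x)^2}\qquad\text{uniformly for even }2\le d\le D_0 .
\]
First I would write $\pi_2(x,d)=N(x,d)+R(x,d)$, where $R(x,d)$ counts primes $p\le x$ with $p-d$ prime but with a further prime $p-j$, $0<j<d$, in between, so that $R(x,d)\le\sum_{0<j<d}\#\{p\le x:\ p,\,p-j,\,p-d\ \text{all prime}\}$. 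Each summand is a three-prime count, and an unconditional upper-bound sieve (Brun or Selberg) gives $O\bigl(\mathfrak S_3(\{0,j,d\})\,x/(\log x)^3\bigr)$, where the ternary singular series obeys the crude uniform bound $\mathfrak S_3(\{0,j,d\})\ll(\log\log x)^{O(1)}$ for $d\le(\log x)^2$ — only primes dividing $j\,d\,(d-j)\le d^3$ matter, and $\sum_{p\mid n}1/(p-1)\ll\log\log\log x$ for $n\le(\log x)^{6}$. Hence $R(x,d)\ll d(\log\log x)^{O(1)}x/(\log x)^3=o\bigl(x/(\log x)^2\bigr)$ uniformly for $d\le D_0$, and since $\pi_2(x,d)\sim\mathfrak S(d)x/(\log x)^2\gg x/(\log x)^2$ by \eqref{eq1} the claim follows. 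Let $P(y)$ be the largest primorial with $P(y)\le D_0$; an exchange argument (swapping a prime factor of $d$ for a smaller unused prime, or adjoining the factor $2$, never decreases $\mathfrak S$ and keeps $d\le D_0$) shows $\mathfrak S(P(y))=\max_{d\le D_0}\mathfrak S(d)$, and $\mathfrak S(P(y))=2C_2\prod_{2<p\le y}\tfrac{p-1}{p-2}\to\infty$ since $y\sim\log\log x$. This last fact is essentially the Erd\H{o}s--Straus observation that $N^*(x)/\bigl(x/(\log x)^2\bigr)\to\infty$; the new content is the comparison below.

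\textbf{The comparison.} If $d^*>(\log x)^2$ then $N(x,d^*)\le x/d^*<x/(\log x)^2$, while $N(x,d^*)=N^*(x)\ge N(x,P(y))\ge(1+o(1))\mathfrak S(P(y))x/(\log x)^2$, which exceeds $x/(\log x)^2$ for large $x$ because $\mathfrak S(P(y))\to\infty$ — a contradiction. So $d^*\le(\log x)^2$, and now $N(x,d^*)\le\pi_2(x,d^*)=(1+o(1))\mathfrak S(d^*)x/(\log x)^2$ by \eqref{eq1} in its full assumed range; also $d^*$ is even. Let $P(Y)$ be the largest primorial $\le(\log x)^2$. The same exchange argument, together with $p_0\nmid d^*$, yields $\mathfrak S(d^*)\le\bigl(1-\tfrac1{p_0-1}+o(1)\bigr)\mathfrak S(P(Y))$: the absent factor $p_0$ costs the factor $1-\tfrac1{p_0-1}$, and the at most one extra prime (of size $\asymp\log\log x$) one could afford in its place contributes only $1+o(1)$. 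Moreover $\mathfrak S(P(Y))=(1+o(1))\mathfrak S(P(y))$, because $\mathfrak S(P(Y))/\mathfrak S(P(y))=\prod_{y<p\le Y}\tfrac{p-1}{p-2}=\exp\bigl(O(\log\log Y-\log\log y)\bigr)$ and $y,Y$ are both of order $\log\log x$, so $\log y\sim\log Y$. Combining,
\[
N(x,d^*)\ \le\ \Bigl(1-\tfrac1{p_0-1}+o(1)\Bigr)\mathfrak S(P(y))\frac{x}{(\log x)^2},\qquad N(x,P(y))\ \ge\ \bigl(1-o(1)\bigr)\mathfrak S(P(y))\frac{x}{(\log x)^2},
\]
which, since $1-\tfrac1{p_0-1}<1$, is incompatible with $N(x,d^*)=N^*(x)\ge N(x,P(y))$ for $x$ large. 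Hence $p_0\mid d$ for every $d\in D^*(x)$ once $x$ is large, which is Conjecture \ref{conjecture2}.

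\textbf{Main obstacle.} I expect the crux to be the lower bound for $N(x,d)$, i.e.\ controlling $R(x,d)$: one must pass from the prime pair conjecture (pairs, not necessarily consecutive) to the count of \emph{consecutive} pairs, and the only device available in the needed direction is the unconditional upper-bound sieve for triples, which confines $d$ to be somewhat below $\log x$ and makes the uniform bound on $\sum_{0<j<d}\mathfrak S_3(\{0,j,d\})$ the real work. The comparison step is then just bookkeeping for how one missing prime factor depresses $\mathfrak S(d)$ while the primorial threshold is moved from $D_0$ up to $(\log x)^2$.
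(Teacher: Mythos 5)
Your proposal is correct and follows essentially the same route as the paper: squeeze $N(x,d)$ between $\pi_2(x,d)$ and $\pi_2(x,d)$ minus sieve-bounded triple counts, confine champions to $d\le(\log x)^2$ via $N(x,d)\le x/d$ and the growth of $\mathfrak S$ at primorials, and then use Mertens to show the singular series of the primorials near $\sqrt{\log x}$ and near $(\log x)^2$ agree to within $1+o(1)$, so a missing fixed prime factor $p_0$ forces a deficit $1-\frac{1}{p_0-1}$ that cannot be absorbed. Your explicit $+o(1)$ in the bound $\mathfrak S(d^*)\le\bigl(1-\frac{1}{p_0-1}+o(1)\bigr)\mathfrak S(P(Y))$, accounting for the one extra large prime that could replace $p_0$, is in fact a slightly more careful rendering of the corresponding step in the paper.
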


The property of the singular series $\mathfrak{S}(d)$ most crucial for jumping champions is that $\mathfrak{S}(d)$ increases most rapidly on the sequence of primorials. This follows readily from the formula
\begin{equation}\label{eq2}
\mathfrak{S}(d)
 = 2C_2\prod_{\substack{p \mid d \\ p > 2}}\left(1+ \frac{1}{p - 2}\right)
\end{equation}
for even $d$. It is evident that each prime divisor of $d$ contributes to increase the size of $\mathfrak{S}(d)$. The following lemma will be used throughout the paper. 

\begin{lemma}\label{auxiliary_lemma}
If
\[
\mathcal{P}_k
 \mathrel{\mathop:}= 2\cdot 3\cdot 5 \cdots p_k
\]
denotes the $k$th term in the sequence of primorials, then the inequality
\[
\mathfrak{S}(d) < \mathfrak{S}(\mathcal{P}_k)
\]
holds for every integer $d$ such that $2 \leq d < \mathcal{P}_k $.
\end{lemma}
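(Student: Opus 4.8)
The plan is to use the fact, clear from \eqref{eq2}, that $\mathfrak{S}(d)$ depends only on the set of odd prime divisors of $d$, combined with the monotonicity of the local factor $\frac{p-1}{p-2}=1+\frac{1}{p-2}$, which is strictly decreasing in $p$ for $p\ge 3$. We may assume $k\ge 2$, since otherwise the range $2\le d<\mathcal{P}_k$ is empty. If $d$ is odd then $\mathfrak{S}(d)=0<\mathfrak{S}(\mathcal{P}_k)$, so assume $d$ is even and let $q_1<q_2<\cdots<q_m$ be its distinct odd prime divisors. Then \eqref{eq2} gives
\[
\mathfrak{S}(d)=2C_2\prod_{i=1}^{m}\Bigl(1+\frac{1}{q_i-2}\Bigr)
\qquad\text{and}\qquad
\mathfrak{S}(\mathcal{P}_k)=2C_2\prod_{j=2}^{k}\Bigl(1+\frac{1}{p_j-2}\Bigr),
\]
the latter because the odd prime divisors of $\mathcal{P}_k=p_1p_2\cdots p_k$ are exactly $p_2=3,\,p_3=5,\,\ldots,\,p_k$.

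The first key step is to bound the number $m$ of odd prime factors. Since $2,q_1,\ldots,q_m$ are pairwise coprime divisors of $d$, we have $d\ge 2q_1q_2\cdots q_m\ge 2\cdot 3\cdot 5\cdots p_{m+1}=\mathcal{P}_{m+1}$, using that the $i$-th smallest odd prime is $p_{i+1}$ so that $q_i\ge p_{i+1}$. If $m\ge k$ this would force $d\ge\mathcal{P}_{k+1}>\mathcal{P}_k$, contradicting the hypothesis; hence $m\le k-1$.

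The second step compares the two products factor by factor. From $q_i\ge p_{i+1}$ and the monotonicity of $\frac{p-1}{p-2}$ we get $1+\frac{1}{q_i-2}\le 1+\frac{1}{p_{i+1}-2}$ for each $i$, whence
\[
\mathfrak{S}(d)\le 2C_2\prod_{i=1}^{m}\Bigl(1+\frac{1}{p_{i+1}-2}\Bigr)=2C_2\prod_{j=2}^{m+1}\Bigl(1+\frac{1}{p_j-2}\Bigr)\le\mathfrak{S}(\mathcal{P}_k),
\]
the last inequality holding because $m+1\le k$ and each factor exceeds $1$. It remains to see that at least one of these inequalities is strict. If $m<k-1$ this is immediate, as the product for $\mathcal{P}_k$ then carries extra factors greater than $1$. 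If $m=k-1$, then $d$ even together with $d<\mathcal{P}_k=2\cdot 3\cdots p_k$ prevents $d$ from being divisible by $3\cdot 5\cdots p_k$, so $\{q_1,\ldots,q_{k-1}\}\ne\{3,5,\ldots,p_k\}$; since the latter is the set of the $k-1$ smallest odd primes, some $q_i>p_{i+1}$ and the corresponding factor is strictly smaller. In either case $\mathfrak{S}(d)<\mathfrak{S}(\mathcal{P}_k)$, as required.

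I do not foresee a serious obstacle: the argument is wholly elementary once one observes that $\mathfrak{S}$ is governed by the set of odd prime divisors and that the local factor decreases in $p$. The only delicate point is upgrading the inequality from $\le$ to $<$, which is handled by the two sub-cases above; the role of the bound $m\le k-1$ is precisely to translate the size constraint $d<\mathcal{P}_k$ into structural information about which divisor sets can occur.
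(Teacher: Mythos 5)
Your proof is correct and follows essentially the same idea as the paper's: since $\mathfrak{S}(d)$ depends only on the set of odd prime divisors and the factor $1+\frac{1}{p-2}$ decreases in $p$, any $d<\mathcal{P}_k$ is dominated by replacing its prime divisors with the smallest available primes. Your write-up is in fact more careful than the paper's sketch (which just says the swapping ``process can be continued''), particularly in pinning down the bound $m\le k-1$ and the strictness of the final inequality.
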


\begin{proof}
We may suppose, since the value of $\mathfrak{S}(d)$ does not depend on the multiplicity of the prime factors, that the integer $d$ is square-free and even. We observe on the one hand  that if $d \mid \mathcal{P}_k$, then by virtue of \eqref{eq2} the result is clear. On the other hand, if $d$ has a prime factor greater than $p_k$, then since $d < \mathcal{P}_k$ we see that $d$ must be short of some prime factors less than $p_k$.  Thus we can increase the size of $\mathfrak{S}(d)$ by replacing the larger prime factor in $d$ by a smaller prime factor that $d$ lacks. This process can be continued until a new integer $d'$ is obtained such that $\mathfrak{S}(d) <  \mathfrak{S}(d')$ and $d' \mid \mathcal{P}_k$. This proves Lemma \ref{auxiliary_lemma}.
\end{proof}

\section{Proof of Theorem \ref{main_theorem}}

The Hardy-Littlewood prime pair conjecture \eqref{eq1} gives an asymptotic formula for the number of differences $p - p' = d$ for primes $p$ and $p'$ with $p\leq x$. These primes need not be consecutive, but Erd\H{o}s and Straus proved that asymptotically the number of such differences that are not between consecutive primes is insignificant as long as $d$ does not grow very rapidly with $x$. For if $p$ and $p'$ are not consecutive primes, then there must exist a third prime $p''$ such that $p'<p''<p$. Writing $d'= p-p''$, we count the number of such triplets of primes with
\[
\pi_3(x,d,d')
 \mathrel{\mathop:}= \sum_{\substack{ p \leq x \\ p - p' = d \\ p - p'' = d'}} 1
\]
and obtain
\begin{equation}\label{eq3}
\pi_2(x,d) - \sum_{1 \leq d' < d} \pi_3(x,d,d')
 \leq N(x,d)
 \leq \pi_2(x,d).
\end{equation}

An application of either the Brun or Selberg sieve (see H. Halberstam and H.-E. Richert's classical monograph \cite{HalberstamRichert1974}, Chapters 2 through 5) then gives the upper bound
\begin{equation}\label{eq4}
\pi_3(x,d,d')
 \ll \mathfrak{S}(\{0,d',d\}) \frac{x}{(\log x)^3},
\end{equation}
where $\mathfrak{S}(\{0,d',d\}) $ denotes the singular series associated with this triple. We will employ the elementary bound
\begin{equation}\label{eq5}
\mathfrak{S}(\{0,d',d\})
 \ll d^{\varepsilon}, \quad \mbox{for all $\varepsilon > 0$,}
\end{equation}
for which a short proof is given in Section \ref{section4}.

Now from \eqref{eq4} and \eqref{eq5} we see that
\[
\sum_{1\leq d' < d} \pi_3(x,d,d')
 \ll d^{1 + \varepsilon} \frac{x}{(\log x)^3}.
\]
Thus by \eqref{eq3} and the Hardy-Littlewood prime pair conjecture \eqref{eq1} in the range $2\leq d \leq (\log x)^2$ we have
\begin{equation}\label{eq6}
N(x,d)
 =\mathfrak{S}(d) \frac{x}{(\log x)^2}\left(1 + o(1)\right), \quad \mbox{for $2 \leq d \leq \sqrt{\log x}$,}
\end{equation}
and the upper bound
\begin{equation}\label{eq7}
N(x,d)
 \leq\mathfrak{S}(d) \frac{x}{(\log x)^2}\left(1 + o(1)\right), \quad \mbox{for $2 \leq d \leq (\log x)^2$.}
\end{equation}

Before we prove Conjecture \ref{conjecture2}, a simple deduction from \eqref{eq6} and \eqref{eq7}, let us observe that \eqref{eq7} will remain true in any range where the Hardy-Littlewood prime pair conjecture \eqref{eq1} holds true, whereas \eqref{eq6} will no longer be true if $d \gg \log x$ because at the average spacing for primes too many of the differences will no longer be between consecutive primes.

In the following, it will be convenient to define a floor function with respect to a given increasing sequence $\{a_k\}_{k = 1}^{\infty}$ by
\[
\lfloor y \rfloor_{a_k}
  \mathrel{\mathop:}= a_n, \quad \mbox{if $a_n\leq y < a_{n+1}$.}
\]
Hence $\lfloor y \rfloor_{\mathcal{P}_k}$ is the largest primorial less than or equal to $y$.

We have
\begin{equation}\label{eq8}
\mathfrak{S}(\lfloor \sqrt{\log x} \rfloor_{\mathcal{P}_k}) \frac{x}{(\log x)^2} (1 - o(1))
 \leq \max_{2 \leq d \leq \sqrt{\log x}} N(x,d)
 \leq N^{*}(x),
\end{equation}
by \eqref{eq6}. Since
\[
\begin{split} N(x,H)
 &\leq \sum_{\substack{p_n \leq x \\ p_n - p_{n - 1} \geq H}} 1
 \leq \sum_{\substack{p_n \leq x \\ p_n - p_{n - 1} \geq H}} \frac{(p_n - p_{n - 1})}{H} \\
 &\leq \frac1{H}\sum_{p_n \leq x} (p_n - p_{n - 1})
 \leq \frac{x}{H},
\end{split}
\]
we have 
\[
N(x,d)
 \leq \frac{x}{(\log x)^2}, \quad \mbox{if $d \geq (\log x)^2$.}
\]
But from \eqref{eq8} and Lemma \ref{auxiliary_lemma}
\[
N^{*}(x)
 \geq \mathfrak{S}(2) \frac{x}{(\log x)^2} (1 - o(1))
 > 1.32 \frac{x}{(\log x)^2},
\]
and so
\[
N^{*}(x)
 =\max_{2 \leq d \leq (\log x)^2} N(x,d).
\]

Suppose now that $d^{*} \in D^{*}(x)$ is a jumping champion. Then
\begin{equation}\label{eq9}
d^{*}
 \leq (\log x)^2.
\end{equation}
By \eqref{eq7} and \eqref{eq8}
\[
\mathfrak{S}(\lfloor \sqrt{\log x} \rfloor_{\mathcal{P}_k}) \frac{x}{(\log x)^2} (1 - o(1))
 \leq N(x,d^{*})
 \leq \mathfrak{S}(d^{*}) \frac{x}{(\log x)^2} (1 + o(1)),
\]
and it follows that
\[
1 - o(1)
 \leq \frac{\mathfrak{S}(d^{*})}{\mathfrak{S}(\lfloor \sqrt{\log x} \rfloor_{\mathcal{P}_k}) }.
\]
Suppose further that $p^{*}$ is a given odd prime such that $p^{*} \nmid d^{*}$. From \eqref{eq2}, \eqref{eq9}, and Lemma \ref{auxiliary_lemma}
\[
\mathfrak{S}(d^{*})
 \leq \mathfrak{S}(\lfloor (\log x)^2 \rfloor_{\mathcal{P}_k}) \left(1 + \frac{1}{p^{*} - 2}\right)^{-1}.
\]
Therefore
\begin{equation}\label{eq10}
\left(1 + \frac{1}{p^{*} - 2}\right) (1 - o(1))
 \leq \frac{\mathfrak{S}(\lfloor (\log x)^2 \rfloor_{\mathcal{P}_k})}{\mathfrak{S}(\lfloor \sqrt{\log x} \rfloor_{\mathcal{P}_k})},
\end{equation}
and it remains to consider the quantity on the right-hand side.

Here we note in particular that $\lfloor y\rfloor_{\mathcal{P}_k} = \mathcal{P}_n$ is equivalent to the inequalities
\[
\prod_{p \leq p_n} p
 \leq y
 < \prod_{p \leq p_{n + 1}} p.
\]
These, in turn, are equivalent to 
\[
\vartheta(p_n)
 \leq \log y
 < \vartheta(p_{n + 1}),
\]
where
\[
\vartheta(x)
 = \sum_{p \leq x} \log p
 \]
is Chebyshev's $\vartheta$-function, and $p$ runs over all primes less than or equal to $x$.

The relation $\vartheta(x) \sim x$ as $x \to \infty$ is equivalent to the Prime Number Theorem (see A. E. Ingham's classical tract \cite{Ingham1932}, Theorem 3, Formula (6), p. 13; and H. L. Montgomery and R. C. Vaughan's monograph \cite{MontgomeryVaughan2007}, Corollary 2.5, p. 49), from which we get that $p_n \sim \log y$ as $n \to \infty$. (Alternatively, we could have avoided the Prime Number Theorem and used Chebyshev's result that $ax \leq \vartheta(x) \leq Ax$ for all sufficiently large $x$, where $a$ and $A$ are positive constants.) From this it follows that
\begin{equation}\label{eq11}
\frac{\mathfrak{S}(\lfloor (\log x)^2 \rfloor_{\mathcal{P}_k})}{\mathfrak{S}(\lfloor \sqrt{\log x} \rfloor_{\mathcal{P}_k})}
 \leq \prod_{\frac{1}{2} (1 - o(1)) \log\log x \leq p \leq 2 (1 + o(1)) \log \log x} \left(1 + \frac{1}{p - 2}\right).
\end{equation}
To evaluate this last expression we use the asymptotic formula, due to F. Mertens (see Ingham's tract \cite{Ingham1932}, Theorem 7, Formula (23), p. 22; and Montgomery and Vaughan's monograph \cite{MontgomeryVaughan2007}, Theorem 2.7, Formula (d), p. 50),
\[
\sum_{p \leq x} \frac{1}{p}
 =\log\log x + B + O\left(\frac1{\log x}\right), \quad \mbox{as $x \to \infty$,}
\]
where $B$ is a constant. Thus the right-hand side of \eqref{eq11} is
\[
\begin{split} 
 &\leq \exp \left(\sum_{\frac{1}{3} \log\log x \leq p \leq 3\log \log x} \log \left(1 + \frac{1}{p - 2}\right)\right) \\
 &\leq \exp \left(\sum_{\frac{1}{3} \log\log x \leq p \leq 3 \log \log x} \frac{1}{p} + O\left(\sum_{n \geq \frac{1}{3} \log\log x} \frac{1}{n^2}\right)\right) \\
 &\leq \exp \left(\log\left(\frac{\log\log\log x + \log 3}{\log\log\log x - \log 3}\right) + O\left(\frac{1}{\log\log\log x}\right)\right) \\
 &\leq \exp \left(\log\left(1 + O\left(\frac{1}{\log\log\log x}\right)\right) + O\left(\frac{1}{\log\log\log x}\right)\right) \\
 &\leq 1 + O\left(\frac{1}{\log\log\log x}\right).
\end{split}
\]
We conclude that
\[
\frac{\mathfrak{S}(\lfloor (\log x)^2 \rfloor_{\mathcal{P}_k})}{\mathfrak{S}(\lfloor \sqrt{\log x} \rfloor_{\mathcal{P}_k})}
 \leq 1 + O\left(\frac{1}{\log\log\log x}\right),
\]
from which, by virtue of \eqref{eq10} we obtain 
\[
1 + \frac{1}{p^{*} - 2}
 \leq 1 + o(1).
\]
This is impossible, unless $p^{*} \to \infty$ as $x \to \infty$. Thus any fixed prime must divide all the jumping champions when $x$ is sufficiently large. This completes the proof of Theorem \ref{main_theorem}.

\section{Proof of the upper bound \eqref{eq5}}\label{section4}

We take up now the postponed proof of the upper bound \eqref{eq5}. Let $\mathcal{D} = \{0, d', d\}$. We have
\[
\mathfrak{S}(\mathcal{D})
 =\prod_p\left(1 - \frac{1}{p}\right)^{-3} \left(1 - \frac{\nu_{\mathcal{D}}(p)}{p}\right),
\]
where $\nu_{\mathcal{D}}(p)$ represents the number of distinct residue classes, modulo $p$, occupied by the elements of the set $\mathcal{D}$.

Now let
\[
\Delta = d' d (d - d').
\]
If $p\nmid \Delta$, then $0$, $d'$, and $d$ are distinct from each other modulo $p$. As a result, $\nu_{\mathcal{D}}(p) = 3$. For these terms we see that
\[
\prod_{p \nmid \Delta} \left(1 - \frac{1}{p}\right)^{-3} \left(1 - \frac{\nu_{\mathcal{D}}(p)}{p}\right)
 =\prod_{p \nmid \Delta} \left(1 - \frac{1}{p}\right)^{-3} \left(1 - \frac{3}{p}\right)
 \leq 1.
\]
We made use of the inequality $(1 - x)^{-3} (1 - 3x) \leq 1$ with $x = 1/p$. (This can be easily verified using mathematical induction to show that the inequality $1 - kx \leq (1 - x)^k$ holds for $x > 0$.) Since the remaining terms are largest when $\nu_{\mathcal{D}}(p) = 1$, we have
\[
\mathfrak{S}(\mathcal{D})
 \leq \prod_{p \mid \Delta} \left(1 - \frac{1}{p}\right)^{-2}
 \leq \left(\prod_{p \leq \Delta} \left(1 - \frac{1}{p}\right)\right)^{-2}
 \ll (\log \Delta)^2
 \ll (\log d)^2
 \ll d^{\varepsilon},
\]
using Mertens's asymptotic formula (see Ingham's tract \cite{Ingham1932}, Theorem 7, Formula (24), p. 22; and Montgomery and Vaughan's monograph \cite{MontgomeryVaughan2007}, Theorem 2.7, Formula (e), p. 50)
\[
\prod_{p \leq x} \left(1 - \frac{1}{p}\right)
 \sim \frac{e^{-\gamma}}{\log x},
\]
where $\gamma$  is Euler's constant. This proves the upper bound \eqref{eq5}.

We remark that with a little more care we could have proved here the sharp bound
\[
\mathfrak{S}(\mathcal{D})
 \ll (\log\log d)^2.
\]

\section*{Acknowledgments}

The authors would like to thank the anonymous referee for carefully reading and correcting a mistake in the original version of this paper.

\end{document}